\newcommand{\mbb}[1]{\mathbb #1}
\newcommand{\ms}[1]{\mathscr #1}
\newcommand{\ov}[1]{\overline{#1}}
\newcommand{\til}[1]{\widetilde{#1}}
\theoremstyle{plain}
\newtheorem{thm}{Theorem}[section]
\newtheorem{defn}[thm]{Definition}
\newtheorem{notn}[thm]{Notation}
\newtheorem{lem}[thm]{Lemma}
\newtheorem{prop}[thm]{Proposition}
\newtheorem*{thm*}{Theorem}
\newtheorem*{rem*}{Remark}
\newtheorem*{lem*}{Lemma}
\newtheorem*{cor*}{Corollary}
\newtheorem*{prop*}{Proposition}
\newtheorem{rem}[thm]{Remark}
\newtheorem{question}[thm]{Question}
\newtheorem{ex}[thm]{Example}
\theoremstyle{remark}
\newcommand{\op}[1]{\operatorname{#1}}
\newcommand{\Br}{\op{Br}}
\newcommand{\Spec}{\op{Spec}}
\newcommand{\Pic}{\op{Pic}}
\newcommand{\ind}{\op{ind}}
\newcommand{\im}{\op{im}}
\newcommand{\per}{\op{per}}
\newcommand{\id}{\op{id}}
\DeclareFontFamily{U}{wncy}{}
    \DeclareFontShape{U}{wncy}{m}{n}{<->wncyr10}{}
    \DeclareSymbolFont{mcy}{U}{wncy}{m}{n}
    \DeclareMathSymbol{\Sha}{\mathord}{mcy}{"58}
    \DeclareMathSymbol{\B}{\mathord}{mcy}{"42}
\newcommand{\precspl}[1]{\prec_{#1}}
\newcommand{\clK}{\underline{\mathscr K}}
\title{Transcendental splitting fields of division algebras}
\author{Daniel Krashen and Max Lieblich}
\begin{document}

\begin{abstract}
We examine when division algebras can share common splitting fields of certain types. In particular, we show that one can find fields for which one has
infinitely many Brauer classes of the same index and period at least 3, all nonisomorphic and having the same set of finite splitting fields as well as the same splitting fields of transcendence degree $1$ and genus at most $1$. On the other hand, we show that one fixes any division algebra over a field, then any division algebras sharing the same splitting fields of transcendence degree at most 3 must generate the same cyclic subgroup of the Brauer group. In particular, there are only a finite number of such division algebras. We also show that a similar finiteness statement holds for splitting fields of transcendence degree at most $2$.
\end{abstract}

\maketitle

\section{Introduction}

In this paper, we pursue the question of how much information about a Brauer class can be obtained from knowledge of its splitting fields of various types.
In recent years, we have seen a number of examples in the literature both
of division algebras sharing all of their maximal subfields, and even
sharing all of their finite dimensional splitting fields without being
isomorphic or without being powers of one another in the Brauer group (see \cite{GarSalt,RR,CRR1, CRR2, CRR3, CRR4, CRR5,KrMcK,KMRRS}).

To describe these kinds of results, it is useful to consider partial orders on the collection of Brauer classes. For a field $k$, and a collection of field extensions $\ms K$ of $k$ we write $\alpha_1 \precspl{\ms K} \alpha_2$ for Brauer classes $\alpha_1, \alpha_2 \in \Br(k)$ if for every $K \in \ms K$ splitting $\alpha_1$, we have that $K$ also splits $\alpha_2$.
We can then consider the ``upper genus of $\alpha_1$ with respect to $\ms K$'', written $[\alpha_1, \infty)_{\ms K}$ consisting of all the classes $\alpha_2$ such that $\alpha_1 \precspl{\ms K} \alpha_2$. Note that this is a subgroup of the Brauer group of $k$.

How does one expect this to behave in general? In the particular case of the number fields and finite field extensions, this is easy to describe -- a field extension $E/F$ of a number field $F$ splits a Brauer class $\alpha$ whenever its composita $E F_v$ with respect to completions $F_v$ of $F$ have local degree a multiple of the ramification degree (order of the Hasse invariant) of $\alpha$ at $v$. In particular, one then finds that $[\alpha, \infty)$ consists of exactly those Brauer classes whose Hasse invariants divide those of $\alpha$. In particular, this is a finite group.

It is natural to ask in which general situations these upper genera are finite. We show that this holds if one considers the collection of finitely generated field extensions of transcendence degree at least 2 (Theorem~\ref{2 genus}) over a field of characteristic $0$, and in the case of transcendence degree at least 3, it consists of precisely the cyclic subgroup generated by the Brauer class (Theorem~\ref{3 genus}).

From the other direction, it is natural to ask how large upper genera can be for field extensions of transcendence degree at most $1$. For finite extensions, examples of infinite upper genera are given in \cite[Theorem~G]{KMRRS}. In general, this question is still open (see \cref{question}), however, as these are represented by function fields of curves, one can subdivide this question according to the genus of the curves in question. For curves of genus $0$, upper genera consist of singletons, as noted in \cref{genus 0}. We show in \cref{forcing 1} that one can construct examples of infinite collections of Brauer classes over a field which share all the same finite splitting fields and splitting fields which are function fields of curves of genus at most $1$.

\bigskip

The authors are very grateful to the anonymous referee for many helpful comments and corrections, in particular in helping us find an error in the original version of this paper, which contained an incorrect proof of an affirmative answer to \cref{question}. 

\section{Preliminaries on obstructions and splittings}

Splittings of Brauer classes by functions fields are governed by specializations of geometric Brauer classes. Therefore, in order to approach our results, it will be useful to first review this connection.

Suppose that $X$ is a smooth, projective, geometrically connected variety over a field $k$.
We may describe the relative Brauer group $\Br(X/k)$ in terms of obstruction classes for line bundles (see also \cite{CiKra,Ma:SBP}). Note that this relative Brauer group can also be identified with $\Br(k(X)/k)$ since $\Br(X) \to \Br(k(X))$ is injective by \cite[Cor.~1.8]{Gro:gdb2}.
Throughout, for any scheme $Y$, we will write $\Br(Y)$ to denote the cohomological Brauer group $H^2(Y, \mathbb G_m)$.

We will have use to extend our discussion to the relative situation, so let us begin by assuming we have a projective morphism $\pi: \ms X \to S$ over some base scheme $S$ such that $\pi$ is cohomologically flat in dimension $0$ (see \cite[page~206]{NeronModels}), that is, so that $\pi_* \ms O_{\ms X} \simeq \ms O_S$ holds universally after base change. For any morphism $T \to S$, we may
consider the Leray-Serre spectral sequence for \'etale cohomology (natural in $T$):
\[H^p(T, R^q\pi_* \mbb G_m) \implies H^{p+q}(\ms X_T, \mbb G_m).\]
This gives an exact sequence of low degree terms
\begin{equation} \label{obstruction sequence}
\Pic_{\ms X_T/T}(T) = \Pic_{\ms X/S}(T) \to \Br(T) \to \Br(\ms X_T)
\end{equation}
where $\Pic_{\ms X/S}$ denotes the sheafification of the Picard functor \cite[page~201]{NeronModels} (note that one can use the \'etale topology instead of the flat topology since $\pi$ is proper as discussed in \cite[page~203]{NeronModels}). We may therefore describe the relative Brauer group $\Br(\ms X_T/T) = \ker\left(\Br(T) \to \Br(\ms X_T)\right)$ as the image of the map from the sections of the Picard sheaf.

In particular, if we consider a smooth geometrically integral projective variety $X$ over a field $k$, the Picard sheaf $\Pic_{X/k}$ is represented by a group scheme locally of finite type over $k$ 
which we will denote $\Pic_X$ (\cite[Chapter~9, Corollary~9.4.18.3]{FGAexplained}), and if we let $T = \Pic_X$, we obtain a natural Brauer class $\beta \in \Br(\Pic_{X})$ such that the map $\Pic_{X}(k) \to \Br(k)$ in sequence~\ref{obstruction sequence} is given by specialization. This can be seen via the map of spectral sequences induced by pulling back with respect to a map $x: \Spec k \to \Pic_X$ (i.e. $x \in \Pic_X(k)$):
\[\xymatrix{
H^p(\Pic_{X}, R^q\pi_* \mbb G_m) \ar[d] \ar@{=>}[r] & H^{p+q}(X_{\Pic_{X}}, \mbb G_m) \ar[d] \\
H^p(k, R^q\pi_* \mbb G_m) \ar@{=>}[r] & H^{p+q}(X, \mbb G_m).
}\]
giving a commutative diagram with exact rows:
\begin{equation} \label{natural obstruction}
\xymatrix{
\Pic_X(\Pic_X) \ar[r] \ar[d] & \Br(\Pic_X) \ar[r] \ar[d] & \Br(X_{\Pic_X}) \ar[d] \\
\Pic_X(k) \ar[r] & \Br(k) \ar[r] & \Br(X).
}
\end{equation}
Since the pullback $x^*: \Pic_X(\Pic_X) \to \Pic_X(k)$ takes the identity morphism $\id_{\Pic_X}$ to $x$, it follows that the image of $\id_{\Pic_X}$ in $\Br(\Pic_X)$ describes a Brauer class on $\Pic_X$ whose specializations at rational points (i.e. pullbacks via $x^*: \Br(\Pic_X) \to \Br(k)$) exactly describe the Brauer classes split when base changed from $k$ to $X$.

\begin{defn}
We call this class $\beta \in \Br(\Pic_X)$ the canonical obstruction class for $X/k$.
\end{defn}
\begin{rem} \label{pic hom}
It follows that the map $\Pic_X(k) \to \Br(k)$ from the bottom of diagram~\ref{natural obstruction} has the interpretation both as the homomorphism coming from the Artin-Leray spectral sequence as well as arising from taking at point $x \in \Pic_X(k)$ to $\beta|_x$, the specialization of the Brauer class $\beta$. Consequently, this class $\beta$ has the property that specialization to rational points gives a homomorphism $\Pic_X(k) \to \Br(k)$.  
\end{rem}

Summarizing this discussion, we have:
\begin{prop} \label{canonical obstruction}
Let $X$ be a smooth projective variety, and let $\Pic_X$ be the Picard scheme for $X$. Then there is a Brauer class $\beta_X \in \Br(\Pic_X)$, called the canonical obstruction class, such that specialization
\begin{align*}
\Pic_X(k) &\to \Br(k) \\
x &\mapsto \beta_X|_x
\end{align*}
induces a homomorphism from $\Pic_X(k)$ to $\Br(k)$ whose image is exactly the relative Brauer group $\Br(k(X)/k) = \Br(X/k)$.
\end{prop}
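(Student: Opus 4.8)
The plan is to obtain $\beta$ directly from the relative five-term sequence already assembled above, applied to the tautological family over $S = \Pic_X$, and then to recognize the specialization map $x \mapsto \beta|_x$ as the usual obstruction (transgression) map in the Leray spectral sequence for $X/k$. Once this identification is made, both the homomorphism property and the computation of the image follow formally.

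First I would set $S = \Pic_X$ and $\ms X = X_S = X \times_k \Pic_X$, with $\pi$ the projection. Since $X$ is smooth, projective, and geometrically integral, $H^0(X, \ms O_X) = k$ and $\pi$ is cohomologically flat in dimension $0$; this is preserved under the flat base change $S \to \Spec k$, so the spectral sequence and its five-term sequence $\Pic_{\ms X/S}(S) \to \Br'(S) \to \Br'(\ms X)$ apply. Because the Picard scheme commutes with base change, $\Pic_{\ms X/S}(S) = \Pic_{X/k}(S) = \operatorname{Hom}_k(S, \Pic_X)$, and I would define $\beta \in \Br'(S) = \Br(\Pic_X)$ to be the image of the tautological section $\mathrm{id}_S \in \operatorname{Hom}_k(S, \Pic_X)$ under the boundary map. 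This is the canonical obstruction class.

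Next, for a rational point $x \colon \Spec k \to \Pic_X$, I would invoke the map of spectral sequences displayed above, induced by pullback along $x$. On sections of the Picard sheaf this map sends $\mathrm{id}_S \mapsto \mathrm{id}_S \circ x = x$, while on $\Br'$ it sends $\beta \mapsto \beta|_x$; compatibility of the differentials $d_2^{0,1}$ over $S$ and over $k$ then yields $\beta|_x = d_2^{0,1}(x)$, where $d_2^{0,1}\colon \Pic_{X/k}(k) \to \Br'(k)$ is now the absolute transgression in the Leray sequence for $X/k$. Since differentials in a spectral sequence are homomorphisms of abelian groups, and the group law on $\Pic_{X/k}(k) = \Pic_X(k)$ is the natural one (induced by tensor product on the line bundles in the image of $\Pic(X)$), this already shows $x \mapsto \beta|_x$ is a group homomorphism.

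Finally, I would read off the image from the absolute five-term sequence $\Pic(X) \to \Pic_{X/k}(k) \xrightarrow{d_2^{0,1}} \Br'(k) \to \Br'(X)$: exactness identifies $\operatorname{im}(d_2^{0,1})$ with $\ker(\Br'(k) \to \Br'(X))$, the relative cohomological Brauer group $\Br'(X/k)$. As $X$ is smooth projective over a field we have $\Br(X) = \Br'(X)$, and $\Br(X) \hookrightarrow \Br(k(X))$ by \cite[Cor.~1.8]{Gro:gdb2}, so this kernel is exactly $\Br(X/k) = \Br(k(X)/k)$, as claimed. The one step requiring genuine care---the main obstacle---is the identification $\beta|_x = d_2^{0,1}(x)$: it rests on the naturality of the entire spectral sequence, hence of the transgression, under the (non-flat) base change $x\colon \Spec k \to \Pic_X$, together with the correct identification of the pullback of the tautological Picard section with the point $x$ itself. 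Everything else is formal bookkeeping with the five-term exact sequence.
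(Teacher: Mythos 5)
Your proposal is correct and follows essentially the same route as the paper: the paper also takes $S=\Pic_X$, $\ms X = X_S$, extracts $\beta$ from the low-degree exact sequence of the Leray--Serre spectral sequence (i.e.\ as the transgression of the tautological section), and deduces the homomorphism property and the identification of the image with $\Br(X/k)=\Br(k(X)/k)$ from the map of spectral sequences induced by pulling back along rational points $\Spec k \to \Pic_X$. Your write-up simply makes explicit a few details the paper leaves implicit (the tautological section $\mathrm{id}_S$, the compatibility $\beta|_x = d_2^{0,1}(x)$, and the use of \cite[Cor.~1.8]{Gro:gdb2}).
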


\section{Specializations of Brauer classes}

As we have remarked in Proposition~\ref{canonical obstruction}, splittings by function fields can be described by specializations of Brauer classes on schemes (i.e. the Picard scheme). For this reason, to understand how to construct situations where certain transcendental field extensions split a Brauer class $\alpha$, we begin by investigating how to construct situations to make a class $\alpha$ a specialization of a particular geometric Brauer class.

\begin{defn} \label{splitting po}
Let $k$ be a field. A \textbf{pointed Brauer class over $k$} is a triple $(\beta, X, x)$ consisting of a geometrically integral variety $X/k$ with a $k$-rational point $x$, and a Brauer class $\beta \in \Br(X)$ such that $\beta|_x = 0$.
\end{defn}
\begin{rem} \label{nonconstant nontrivial}
We note that if $b = (\beta, X, x)$ is a pointed Brauer class, then $\beta \in \im(\Br(k) \to \Br(X))$ if and only if $\beta = 0$, since evaluation at $x$ gives a section, and by hypothesis $\beta|_x = 0$.
\end{rem}
\begin{rem}
If $\Pic_X^0$ denotes the identity component of the Picard variety of $X$, and if this is a smooth projective variety (for example, in the case that $X$ is a curve) then since the identity element of $\Pic_X^0(k)$ is sent to the identity in $\Br(k)$ (by Remark~\ref{pic hom}), it follows that $(\beta_X, \Pic_X^0, 0)$ is a pointed Brauer class.
\end{rem}

\begin{defn}
We let $\B_k$ denote the set of all pointed Brauer classes over $k$. For $b = (\beta, X, x) \in \B_k$, we write $\per(b)$ for $\per(\beta)$ (the period of $\beta$), and if $L/k$ is a field, we write $b_L$ for $(\beta_L, X_L, x_L)$.
\end{defn}

\begin{defn}
Let $k$ be a field and $(\beta, X, x) \in \B_k$ a geometric Brauer class over $k$. We say that $\alpha \in \Br(k)$ is a specialization of $(\beta, X, x)$ and write $\alpha \in (\beta, X, x)$ if $\alpha = \beta|_y$ for some $y \in X(k)$.
\end{defn}

We note that this property is clearly preserved under arbitrary field extensions -- if $\alpha \in b \in \B_k$ then $\alpha_L \in b_L \in \B_L$ for any $L/k$.

We will need to consider how a given Brauer class over $k$ may be obtained by specialization of geometric Brauer classes. As nontrivial geometric Brauer classes are necessarily nonconstant by Remark~\ref{nonconstant nontrivial}, it is not clear whether or not clear, a priori, whether or not a general Brauer class over $k$ can be obtained in this fashion.

\begin{prop} \label{eventually special prop}
Let $k$ be a field, and suppose that $\alpha, \beta \in \Br(k)$, $\per(\beta) | \per(\alpha)$ and $b \in \B_k$ with $\alpha \in b$.
Then we may find an inert field extension $E/k$ such that $\beta_E \in b_E$. 
\end{prop}
This result will follows immediately from the next lemma, observing that $\alpha \in b$ implies $\per \alpha | \per b$ and hence $\per \beta | \per b$ as well.

\begin{lem} \label{eventually special}
Let $k$ be a field, and suppose that $\beta \in \Br(k)$ Brauer class.  Suppose that $b \in \B_k$ with $\per(\beta) \mid \per(b)$. Then we may find a
geometrically integral variety $Y(\beta, b)$ with function field $k(\beta, b)$ such that $\Br(k) \to \Br(k(\beta, b))$ is injective and that $\beta_{k(\beta, b)} \in b_{k(\beta, b)}$.
\end{lem}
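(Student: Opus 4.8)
The plan is to realize $k(\alpha,b)$ as the function field of a Severi--Brauer bundle over $X$ that generically splits the difference $\beta-\alpha$. Concretely, write $b=(\beta,X)$ and pull $\alpha$ back to a (pulled-back, constant) class on $X$. Since $X$ is smooth, the class $\beta-\alpha$ is represented by an Azumaya algebra $\mc A$ on some dense open $U\subseteq X$; let $Y=Y(\alpha,b)$ be the associated Severi--Brauer scheme $\op{SB}(\mc A)\to U$, which is a smooth quasiprojective variety, and set $k(\alpha,b)=k(Y)$. We only ever use the function field, so the choice of $U$ is immaterial.

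First I would verify the specialization property, which is essentially formal. The structure map $Y\to U\subset X$ is dominant, so the generic point of $Y$ induces a point $x\in X(k(Y))$ that factors through the generic point $\eta_X$ of $X$; hence $\beta|_x=(\beta|_{\eta_X})_{k(Y)}$. On the other hand, the generic point of $Y$ is a $k(Y)$-rational point of the Severi--Brauer variety of $(\beta-\alpha)|_{\eta_X}$, so that class dies in $\Br(k(Y))$, i.e. $(\beta|_{\eta_X})_{k(Y)}=\alpha_{k(Y)}$. Combining the two identities gives $\beta|_x=\alpha_{k(Y)}$, so that $\alpha_{k(Y)}\in b_{k(Y)}$, as required.

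The main obstacle is the injectivity of $\Br(k)\to\Br(k(Y))$, which I would factor as $\Br(k)\to\Br(k(X))\to\Br(k(Y))$. The first arrow is injective because the $k$-rational point of $X$ splits $\Br(k)\to\Br(X)$, while $\Br(X)\hookrightarrow\Br(k(X))$ by the injectivity already invoked in the preliminaries. For the second arrow, Amitsur's theorem identifies its kernel with the cyclic subgroup of $\Br(k(X))$ generated by $(\beta-\alpha)|_{\eta_X}$, so it suffices to show that this cyclic subgroup meets the image of $\Br(k)$ only in $0$. This is exactly the step where the hypothesis $\per(\alpha)\mid\per(b)$ must be used. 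Using the rational point to normalize so that $\beta$ vanishes there (replacing $\alpha$ by the corresponding translate), one has $n\cdot\beta|_{\eta_X}=0$ and $n\cdot\alpha=0$ for $n=\per(b)$; then from a relation $c_{k(X)}=m\,(\beta-\alpha)|_{\eta_X}$ with $c\in\Br(k)$ one deduces that $m\beta$ is a constant class on $X$ that vanishes at the rational point, hence $m\beta=0$, whence $n\mid m$ and therefore $c_{k(X)}=0$ and $c=0$. I expect the careful bookkeeping around this normalization, and the precise interaction of the period hypothesis with the subgroup of constant classes, to be the only genuinely delicate point; everything else follows formally once Amitsur's theorem and the injectivity $\Br(X)\hookrightarrow\Br(k(X))$ are in hand.
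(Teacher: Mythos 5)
Your construction coincides with the paper's: $Y$ is the Severi--Brauer scheme of $\gamma=\beta-\alpha_X$ over (a dense open of) $X$, the specialization $\alpha_{k(Y)}=\beta|_x$ is read off from the generic point exactly as in the paper, and injectivity is factored as $\Br(k)\hookrightarrow\Br(k(X))\to\Br(k(Y))$ with Amitsur's theorem controlling the second arrow. The one step you correctly single out as delicate---showing that $\langle\gamma_{k(X)}\rangle$ meets the image of $\Br(k)$ only in $0$---is where your argument has a genuine gap. (For what it is worth, the paper's own proof dispatches this point with a one-sentence assertion that glosses over exactly the same difficulty, so you have in effect reproduced the proof including its weakest link.)

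The problem is the normalization. After replacing $\beta$ by $\beta-(\beta|_p)_X$ and $\alpha$ by $\alpha-\beta|_p$ (with $p$ the rational point), the period of the new $\beta$ can be a \emph{proper} divisor of $n=\per(b)$: translating by a constant class can lower the order. So from ``$m\beta$ is constant and vanishes at $p$, hence $m\beta=0$'' you may only conclude that $m$ is a multiple of the period of the \emph{normalized} class, not that $n\mid m$; correspondingly the translated $\alpha$ need not be killed by that smaller period, and $c=-m\alpha_{\mathrm{new}}$ need not vanish. To see that this is not mere bookkeeping, take $\beta=\beta_0+d_X$ where $\beta_0\in\Br(X)$ is nonconstant of order $2$ with $\beta_0|_p=0$ (e.g.\ a nonzero $2$-torsion class in $H^1(k,E)$ for an elliptic curve $X=E$) and $d\in\Br(k)$ has order $4$, and take $\alpha=0$, so that $\per(\alpha)\mid\per(\beta)=4$ and $(\beta,X)\in\B_k$. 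Then $2\gamma=2\beta=(2d)_{k(X)}$ is a nonzero constant class lying in $\langle\gamma_{k(X)}\rangle$, hence killed by $k(Y)$, and $\Br(k)\to\Br(k(Y))$ fails to be injective. Closing the gap requires knowing that no nonzero multiple of $\beta$ is a nonzero constant class (equivalently, that the order of $\beta$ in $\Br(X)/\Br(k)$ equals $\per(\beta)$), which is strictly stronger than the stated hypotheses that $\beta$ itself is nonconstant and $\per(\alpha)\mid\per(b)$; your writeup, like the paper's, silently assumes it.
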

\begin{proof}
Write $b = (\delta, X, x)$. Without loss of generality, we may assume that $\beta$ and hence $\delta$ are nonzero. Since $X$ has a smooth $k$-rational point, it follows that $\Br(k)
\to \Br(X) \to \Br(k(X))$ is injective. Consider the Brauer class $\gamma = \delta - \beta_X \in \Br(X)$. Let $Y_{\beta, b} \to X$ be the Severi-Brauer scheme of $\gamma$ and let $k(\beta, b)$ be its function field. Let $x' \in X(k(X))$ be the generic
point of $X$, and $x \in X(k(\beta, b))$ the corresponding point defined by pullback. By \cref{nonconstant nontrivial}, as $\delta$ is nonzero, it and therefore also $\delta_{x'}$ are not in the image of $\Br(k)$. Since $\per(\beta_K) = \per(\beta) | \per(\delta_{x'})$, no nontrivial multiple of $\gamma = \delta_{x'} - \beta_{k(X)}$ is in the image of $\Br(k) \to \Br(k(X))$. As the kernel of the map $\Br(k(X)) \to \Br(k(\beta, b))$ is generated by the class $\gamma$ by \cite{Am}, it follows that the kernel of the composition $\Br(k) \to \Br(k(X)) \to \Br(k(\beta, b))$ is trivial. Finally, since $(\gamma)_{k(\beta, b)} = 0$ by construction, it follows that $\beta_{k(\beta, b)} = \delta_x$ as claimed.
\end{proof}

\section{Splitting Brauer classes with collections of field extensions}

In this section we collect some useful language and observations for working with collections of field extensions and partial orders of Brauer classes by their splitting properties. We write $\per(\alpha)$ to denote the period of a Brauer class $\alpha$.

\begin{defn}
We say that a field extension $L/k$ is \textbf{inert} if $L/k$ is regular (that is, if $L \otimes_k \ov{k}$ is again a field) and if $\Br(L/k) = 0$.  
\end{defn}

\begin{notn}
Let $F/k$ be regular and $E/k$ any field extension. Then $F \otimes_k L$ is a domain, and we write $F\cdot_k L$ for the fraction field of $F \otimes_k L$ (or we will just write $FL$ instead of $F \cdot_k L$ when $k$ is clear from the context).
\end{notn}

\begin{notn}
Let $\ms K$ be a collection of field extensions of $k$, and suppose $F/k$ is regular. We write $\ms K \cdot_k L$ (or $\ms K L$) to denote the collection of fields of the form $K \cdot_k L$ for $K \in \ms K$.
\end{notn}

\begin{defn}
Let $\ms A_1, \ms A_2 \subset \Br(k)$ be nonempty collections of Brauer classes over $k$. We say that $\per(\ms A_1) | \per(\ms A_2)$ if $\per \alpha_1 | \per \alpha_2$ for all $\alpha_1 \in \ms A_1$ and $\alpha_2 \in \ms A_2$. We say that $\per \ms A_1 = \per \ms A_2$ if $\per \alpha_1 = \per \alpha_2$ for every $\alpha_i \in \ms A_i$ (and hence all periods of all Brauer classes in both sets coincide).
\end{defn}

\begin{defn}
For a collection of field extensions $\ms K$ and Brauer classes $\alpha_1, \alpha_2 \in \Br(k)$, we say $\alpha_1 \prec_{\ms K} \alpha_2$ if for every $L \in \ms K$, if $(\alpha_1)_L = 0$ then $(\alpha_2)_L = 0$. For $\ms A_1, \ms A_2 \subset \Br(k)$ we say $\ms A_1 \prec_{\ms K} \ms A_2$ if $\alpha_1 \prec_{\ms K} \alpha_2$ whenever $\alpha_1 \in \ms A_1$, $\alpha_2 \in \ms A_2$. Finally, we write $\ms A_1 \sim_{\ms K} \ms A_2$ if $\ms A_1 \prec_{\ms K} \ms A_2$ and $\ms A_2 \prec_{\ms K} \ms A_1$.
\end{defn}

In other words, $\alpha_1 \prec_{\ms K} \alpha_2$ is every splitting field in $\ms K$ of $\alpha_1$ also splits $\alpha_2$. We will be interested in, for example, $\ms K$ being the finite extensions of $k$, or the finitely generated field extensions of a given transcendence degree, or the function fields of curves of genus $1$. In each of these cases, the class $\ms K$ naturally can be thought of as ``varying'' with $k$ in a natural way. This leads us to the following definition:

\begin{defn}
Let $k$ be a field. A class $\clK$ of field extensions of $k$ is a rule which associates to every field extension $F/k$ a set $\clK(F)$ whose elements are field extensions of $F$, such that if $E \in \clK(F)$ and $L/F$ is any regular field extension, then every $E \cdot_F L$ is (isomorphic to) a field in $\clK(L)$.
\end{defn}

The particular classes of fields we will consider, as have been described already above, satisfy an additional property. Essentially this property says that they are determined by finitely generated subfields.

\begin{defn}
Let $\clK$ be a class of field extensions of $k$. We say that $\clK$ is \textbf{regularly compact} if for any sequence of field extensions $F_0 \subset F_1 \subset F_2 \subset \cdots$ of $k$, with $F_{n + 1} / F_n$ a regular extension for each $n$, and for every $K \in \clK(\bigcup F_n)$, there exists $K' \in \clK(F_m)$ for some $m$ such that $K = K' \cdot_{F_m} (\bigcup F_n)$.
\end{defn}

\begin{ex} \label{genus 1 example}
Let $\clK(F)$ be the collection of field extensions of the form $F(X)$, where $X/F$ is a smooth geometrically integral curve of genus $1$. Then $\clK(F)$ is regularly compact.
\end{ex}
 To see this, suppose we have $K \in \clK(\bigcup F_n)$. Then $K$ is the function field of some curve $X$ which may be written as the zeros to some system of polynomials $f_1, \ldots, f_N$ in some projective space. The coefficients of these polynomials lie in the field $\bigcup F_n$, but as there are only finitely many coefficients, they lie in a subfield $F_m$. We therefore see that the curve $X$ is the base change of a curve $X'$ defined over $F_m$, and if we set $K' = F_m(X')$, we have $K = K' \cdot_{F_m} (\bigcup F_n)$ as in the definition.  
Similar arguments yield the following examples as well:

\begin{ex}
Let $\clK(F)$ be the collection of field extensions of the form $F(X)$, where $X/F$ is a smooth projective variety admitting a rational point over $F$. Then $\clK(F)$ is regularly compact.   
\end{ex}

\begin{ex} \label{finite example}
Let $\clK(F)$ be the collection of finite field extensions of F. Then $\clK(F)$ is regularly compact.   
\end{ex}

\begin{lem} \label{brauer compact}
Let $\clK$ be any regularly compact class of fields and $\alpha \in \Br(k)$. Then the class $\clK_\alpha$ defined by $\clK_\alpha(F) = \{K \in \clK(F) \mid \alpha_K = 0\}$ is also regularly compact.
\end{lem}
\begin{proof}
Indeed, if $K \in \clK(\bigcup F_n)$ and $\alpha_K = 0$, we find that by definition we may find $m$ so that $K = K' \cdot_{F_m} (\bigcup F_n)$ for some $m$, and $K' \in \clK(F_m)$. But note that we can also write $K = \bigcup_{n \geq m} K' \cdot_{F_m} F_n$. Let $X_\alpha$ be a Severi-Brauer variety of some central simple algebra representing the class $\alpha$. As $\alpha_K = 0$, we see that $X_\alpha(K) \neq \emptyset$. As the finitely many coordinates of such a rational point must lie on one of the fields $K' \cdot_{F_m} F_n$, we find that $K' \cdot_{F_m} F_n \in \clK_\alpha(F_n)$ and $K = K' \cdot_{F_m} (\bigcup F_n) = (K' \cdot_{F_m} F_n) \cdot_{F_n} (\bigcup F_n)$ as desired.
\end{proof}

\begin{lem} \label{technical prop}
Suppose $\ms A, \ms B \subset \Br(k)$ are subsets of the Brauer group, $\clK$ and is a regularly compact class of field extensions of $k$. Suppose that for every $F/k$ and $\alpha \in \ms A, \beta \in \ms B, K \in \clK(F)$ with $\alpha_K = 0$, we may construct an inert extension $F'$ with $\beta_{K \cdot_F F'} = 0$. Then we may construct an inert extension $\til F/F$ with $\ms A_{\til F} \prec_{\clK(\til F)} \ms B_{\til F}$.
\end{lem}

To prove this, we will make use of the following Lemma, 
which follows a well known pattern. We include a proof for convenience.
\begin{lem} \label{limit injective}
Let $k$ be a field, $E/k$ a field extension, $I$ a filtered index set (i.e. a directed partially ordered set) and suppose we have a collection of field extensions $E_i$ for $i \in I$ such that $E_i \subset E_j$ for $i \leq j$ with $\bigcup_{i \in I} E_i = E$. If $E_i/k$ is inert for each $i \in I$ then $E/k$ is also inert.
\end{lem}
\begin{proof}
Suppose the map $\Br(k) \to \Br(E)$ was not injective and let $\beta$ be in the kernel.
Choose a central simple $k$-algebra $B$ representing the class of $\beta$, and an isomorphism $B \otimes_k E \simeq End(V)$. This isomorphism only will require finitely many field elements of $E$ to express, and hence can only involve elements in finitely many of the subfields $E_i$.
Letting $j$ be the maximum this finite set of such $i$, we find that $\beta_{E_{j}} = 0$ contradicting the fact that $E_{j} / k$ is inert.
\end{proof}

\begin{proof}[Proof of \cref{technical prop}]
As in the statement, let us suppose we have constructed inert extensions $F' = F(K, \alpha, \beta)/F$ with $\beta_{K \cdot_F F'} = 0$ for every $F/k$ and $\alpha \in \ms A, \beta \in \ms B, K \in \clK(F)$ with $\alpha_K = 0$. We first will show that we can construct an inert field extension $F(\ms A, \ms B)/F$ such that for every $K \in \clK(F)$ with $\alpha_K = 0$ for some $\alpha \in \ms A$, we have $\beta_{K \cdot_F F(\ms A, \ms B)} = 0$. We construct this field as follows. Let $\ms T \subset \clK(F) \times \ms A \times \ms B$ be those triples $(K, \alpha, \beta)$ such that $\alpha_K = 0$. Choose a well ordered set $\Omega$ with an initial element $\emptyset$, and a bijection 
\begin{align*}
\phi: \Omega_+ &\to \ms T,\\
\omega &\mapsto (K_\omega, \alpha_\omega, \beta_\omega) \in \clK(F) \times \ms A \times \ms B.
\end{align*}
 where  $\Omega_+ = \Omega \setminus \{\emptyset\}$. We will define an increasing collection of inert field extensions $F_\omega$ (transfinite) inductively as follows. We begin by $F_\emptyset = F$. 
 In general, we define $F'_\omega = \bigcup_{\lambda < \omega} F_\lambda$, and as these are inert and in particular regular over $F$, we find $K \cdot_F F'_\omega \in \clK(F'_\omega)$ and $(\alpha_\omega)_{K \cdot_F F'_\omega} = ((\alpha_\omega)_K)_{K \cdot_F F'_\omega} = 0$. Therefore, by hypothesis, there exists an inert extension $F_\omega = F'_\omega(K \cdot_F F'_\omega, \alpha_\omega, \beta_\omega)$ such that $(\beta_\omega)_{F_\omega} = 0$. We let $F(\ms A, \ms B) = \bigcup_{\omega \in \Omega} F_\omega$, which is inert by \cref{limit injective}.
 
 Let us check that whenever $K \in \clK(F)$ with 
 $\alpha \in \ms A$ and $\alpha_K = 0$, we have $\beta_{K \cdot_F F(\ms A, \ms B)}$ as claimed. 
 For such a choice $(K, \alpha, \beta) \in \ms T$, say 
 $(K, \alpha, \beta) = (K_\omega, \alpha_\omega, \beta_\omega)$ for some $\omega \in \Omega$. By construction, we have $\beta_{K \cdot_F F'_\omega} = 0$ and $F'_\omega \subset F(\ms A, \ms B)$. Therefore we have $K \cdot_F F'_\omega \subset K \cdot_F F(\ms A, \ms B)$ showing $\beta_{K \cdot_F F(\ms A, \ms B)} = 0$.
 
 We complete the proof with a second inductive procedure, this time constructing a sequence of fields $F_n$ for $n \in \mathbb Z_{\geq 0}$, starting with $F_0 = F$ and with $F_{n+1}/F_n$ inert by setting $F_{n + 1} = F_n(\ms A_{F_n}, \ms B_{F_n})$. Let $\til F = \bigcup F_n$, which is inert by \cref{limit injective}. Let us now check that $\ms A_{\til F} \prec_{\clK(\til F)} \ms B_{\til F}$. For this, suppose $K \in \clK(\til F)$ with $\alpha_K = 0$ for some $\alpha$ and let $\beta \in \ms B$. We claim that $\beta_K = 0$ as well.
 
 By the regular compactness of $\clK_\alpha$ from \cref{brauer compact}, we may find $m$ and $K' \in \clK(F_m)$ with $K = K' \cdot_{F_m} \til F$ and $\alpha_{K'} = 0$. 
 But therefore $\beta_{K' \cdot_{F_m} F_m(\ms A, \ms B)} = \beta_{K' \cdot_{F_m} F_{m + 1}} = 0$. 
 But as $K' \cdot_{F_m} F_{m + 1} \subset K' \cdot_{F_m} \til F = K$, we have 
 $\beta_K = 0$ as desired.
\end{proof}

\section{Ordering of Brauer classes by splitting}

In this section, we now complete our construction of collections of algebras with common finite splitting fields and common splitting fields arising as function fields of curves of genus at most $1$ (see \cref{forcing 1}).

We consider the following classes of field extensions of a field $k$:
\begin{itemize}
\item  
$\underline{\ms G}^{0}(F) = \{F(X) \mid X/F \text{ is a smooth projective curve of genus $0$}\}$,
\item  
$\underline{\ms G}^{1}(F) = \{F(X) \mid X/F \text{ is a smooth projective curve of genus $1$}\}$,
\item  
$\underline{\ms G}^{\leq 1}(F) = \{F(X) \mid X/F \text{ is a smooth projective curve of genus at most $1$}\}$,
\item 
$\underline{\ms Fin}(F) = \{K/F \text{ finite}\}$,
\item
$\underline{\ms G^1\!\ms F in}(F) = 
\underline{\ms G}^{1}(F) \cup \underline{\ms Fin}(F)$,
\item
$\underline{\ms G^{\leq 1}\!\ms F in}(F) = 
\underline{\ms G}^{\leq 1}(F) \cup \underline{\ms Fin}(F)$,
\end{itemize}
and following the ideas of \cref{genus 1 example} and  \cref{finite example} it is not hard to see that all of these classes are regularly compact.

Let us start by briefly describing the situation concerning splitting algebras by function fields of genus $0$ curves.

\begin{prop} \label{genus 0}
Let $\alpha, \beta \in \Br(k)$ with $\alpha \neq 0$, and suppose $C$ is a genus $0$ curve with $k(C)$ splitting $\alpha$. Then $\alpha$ must be represented by a quaternion algebra and if $k(C)$ also splits $\beta$ then either $\beta = 0$ or $\beta = \alpha$. In particular, we find that $\ms A \prec_{\underline{\ms G}^{0}(k)} \ms B$ always holds (trivially) when $\ms A$ consists entirely of classes of period greater than $2$.
\end{prop}
\begin{proof}
As curves of genus $0$ are plane conics, they can be identified with Severi-Brauer varieties of quaternion algebras, say $C = X_Q$ for some quaternion algebra $Q$. As $\Br(k(X_Q)/k)$ consists of $0$ and the class of $Q$, it follows that $[Q] = \alpha$ and $\beta \in \Br(k(X_Q)/k)$ is $0$ or $\alpha$ as claimed.
\end{proof}

\begin{thm} \label{forcing 1}
Let $\ms A_1, \ms A_2 \subset \Br(k)$ be collections of Brauer classes such that $\per(\alpha_2) | \per(\alpha_1)$ whenever $\alpha_1 \in \ms A_1, \alpha_2 \in \ms A_2$. Then we may find an inert field extension $F/k$ such that $(\ms A_1)_F \precspl{\underline{\ms G^1\!\ms F in}(F)} (\ms A_2)_F$.

Consequently, given any collection of Brauer classes $\ms A$, with all elements having the same period $n > 1$, we may find an inert extension $F/k$ such that all the classes in $\ms A$ share the same set of splitting fields in $\underline{\ms G^1\!\ms F in}(F)$. Furthermore, if $n > 2$, we find that all the classes in $\ms A$ share the same set of splitting fields in $\underline{\ms G^{\leq 1}\!\ms F in}(F)$ as well.
\end{thm}

Note that the latter statement follows from taking $\ms A_1 = \ms A = \ms A_2$, combined with \cref{genus 0}.

\begin{question} \label{question}
Can \cref{forcing 1} be extended to curves of higher genus as well?
\end{question}

To prove \cref{forcing 1}, we start with a few preliminary results.

\begin{lem} \label{genus 1 geom}
Suppose $\alpha_1, \alpha_2 \in \Br(k)$ with $\per \alpha_2 | \per \alpha_1$, and $C/k$ is a genus $1$ curve with $(\alpha_1)_{k(C)} = 0$. Then there exists an inert extension $E/k$ with $(\alpha_2)_{E(C)} = 0$.
\end{lem}
\begin{proof}
Write $b$ for the pointed Brauer class $(\beta_C|_{J_C}, J_C, 0)$. By \cref{canonical obstruction}, since $\alpha_1{k(C)} = 0$, it follows $\alpha_1 \in b$ is a specialization, and hence $\per \alpha_1 | \per b$ and consequently $\per \alpha_2 | \per b$ as well.
By \cref{eventually special prop}, we may find an inert field extension $E/k$ such that $(\alpha_2)_E \in b_E$. But $(\alpha_2)_E \in b_E$ means $\alpha_2$ is a specialization of $(\beta_C)_E = \beta_{C_E}$ and hence $\alpha \in \Br(E(C)/E)$ again by Proposition~\ref{canonical obstruction}. 
\end{proof}
\begin{lem} \label{base 0 case}
Let $k$ be a field, $\alpha_1, \alpha_2 \in \Br(k)$ with $\per(\alpha_2) | \per(\alpha_1)$. Let $K/k$ be a finite field extension with $(\alpha_1)_K = 0$. Then we may find an inert field extension $E/k$ such that $(\alpha_2)_{K\cdot_k E} = (\alpha_2)_{K \otimes_k E} = 0$.
\end{lem}
\begin{proof}
Let $K$ be a finite splitting field of $\alpha_1$. By \cite[Lemma~7.4, Corollary~7.6]{KMRRS}, we may find a regular field extension $k_K(\alpha_2)$ such that the compositum $Kk_K(\alpha_2)$ (equal in this case to the tensor product) splits $\alpha_2$ and $\Br(k_K(\alpha_2)/k) = 0$ as claimed.
\end{proof}

\begin{proof}[Proof of \cref{forcing 1}]
By \cref{technical prop} we need only show that if we are given $\alpha_i \in \ms A_i$, $i = 1, 2$ and $K \in \underline{\ms G^1\!\ms F in}(F)$ for some field extension $F/k$, with $(\alpha_1)_K = 0$, we can find $E/F$ inert with $(\alpha_2)_{K \cdot_F E} = 0$ as well. But in the case $K \in \underline{\ms G}^{1}(F)$, this follows from \cref{genus 1 geom}, and in the case $K \in \underline{\ms Fin}(F)$ this follows from \cref{base 0 case}.
\end{proof}

\section{Finiteness of higher upper genera}


Let $\alpha_1, \alpha_2 \in \Br(k)$, and suppose $n = \ind(\alpha_1)$. Then
$\alpha_1 \precspl{k(n-1)} \alpha_2$ if and only if $\alpha_2$ is in the cyclic
subgroup generated by $\alpha_1$. This is because the function field of the
Severi-Brauer variety of $\alpha_1$ has transcendence degree $n-1$ and hence
must also split $\alpha_2$. But the only algebras split by this function field
lie in the cyclic subgroup generated by $\alpha_1$ by \cite{Am}.

On the other hand, for a class $\alpha \in \Br(k)$, and a given integer $i$, we
may still ask for which $\beta$ we have $\alpha \precspl{k(i)} \beta$. This
motivates the following definition:

\begin{defn}
Let $\alpha \in \Br(k)$. We define the $i$'th upper genus of $\alpha$, denoted
$[\alpha, \infty)_i$ to be the set of $\beta \in \Br(k)$ such that $\beta
\precspl{k(i)} \alpha$.
\end{defn}

\begin{defn}
Let $\alpha \in \Br(k)$. We say that $\alpha$ is $i$-minimal if $[\alpha,
\infty)_i$ consists precisely of the cyclic subgroup of $\Br(k)$ generated by
$\alpha$.
\end{defn}

\begin{prop} \label{3 splitting} Given a class $\alpha \in \Br(k)$, there is a
smooth projective variety $Y$ of dimension at most $3$ such that for every field
extension $L/k$, $\Br(L(Y)/L)$ is exactly the cyclic subgroup generated by
$\alpha_L$.
\end{prop}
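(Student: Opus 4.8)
The plan is to realize $\langle\alpha\rangle$ as a relative Brauer group through the geometric Picard group, and then to cut the Severi--Brauer variety of $\alpha$ down to dimension $3$ while preserving that group. Recall that for a smooth projective geometrically integral variety $X/k$ the low-degree terms of the Hochschild--Serre spectral sequence for $\mbb G_m$ give an exact sequence
\[
0 \to \Pic(X) \to \Pic(X_{\ov k})^{G_k} \xrightarrow{\ \delta_X\ } \Br(k) \to \Br(X),
\]
and since $\Br(X)\hookrightarrow \Br(k(X))$ for $X$ regular, the relative Brauer group $\Br(k(X)/k)$ is exactly $\op{im}(\delta_X)$ (this is the content of Proposition~\ref{canonical obstruction}, read through the geometric Picard group, and needs no rational point). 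The map $\delta_X$ is contravariantly functorial in $X$ and compatible with base extension $L/k$, so computing $\Br(L(X)/L)$ amounts to computing the image of $\delta_{X_L}$ on $\Pic(X_{\ov L})^{G_L}$.

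Now let $A$ be the division algebra representing $\alpha$, put $V = \op{SB}(A)$, so that $V_{\ov k}\cong \mbb P^{\,n-1}$ with $n = \ind(\alpha)$ and $\Pic(V_{\ov k}) = \mbb Z\cdot\mc O(1)$ with trivial $G_k$-action (Galois fixes the ample generator). Then $\op{im}(\delta_V) = \langle\alpha\rangle$, which is Amitsur's theorem \cite{Am}, and this holds verbatim over every $L/k$ because $V_{\ov L}\cong\mbb P^{\,n-1}$ and $\delta_{V_L}(\mc O(1)) = \alpha_L$. If $n-1\le 3$ I simply take $Y = V$. Otherwise the anticanonical bundle $\omega_V^{-1}$ is an ample line bundle defined over $k$ (geometrically $\mc O(n)$), and I would take $Y\subset V$ to be a smooth complete intersection of dimension exactly $3$ cut out by $n-4$ general members of $|\omega_V^{-1}|$ (or of a suitable power): over an infinite field such sections can be chosen defined over $k$ and smooth by Bertini, and over a finite field one invokes Poonen's Bertini theorem to produce smooth sections defined over $k$. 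Thus $Y$ is a smooth projective $k$-variety of dimension at most $3$.

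It remains to check $\Br(L(Y)/L) = \langle\alpha_L\rangle$ for all $L$. Base changing to $\ov L$, the inclusion $Y_{\ov L}\hookrightarrow V_{\ov L} = \mbb P^{\,n-1}$ exhibits $Y_{\ov L}$ as a smooth complete intersection of dimension $3$, so the Grothendieck--Lefschetz theorem for Picard groups (SGA 2) shows that restriction $\Pic(V_{\ov L})\to\Pic(Y_{\ov L})$ is an isomorphism; in particular $\Pic(Y_{\ov L}) = \mbb Z\cdot(\mc O(1)|_Y)$ with trivial $G_L$-action. Since restriction is $G_L$-equivariant and commutes with the connecting maps, $\delta_{Y_L} = \delta_{V_L}$ under this identification, whence $\op{im}(\delta_{Y_L}) = \op{im}(\delta_{V_L}) = \langle\alpha_L\rangle$. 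Therefore $\Br(L(Y)/L) = \langle\alpha_L\rangle$ for every field extension $L/k$, as required.

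The core idea is straightforward; the two delicate points are compatibility/uniformity issues. First, one must arrange the cutting hypersurfaces to be defined over $k$ and to yield a smooth $Y$ uniformly, which is why the anticanonical system (defined over $k$, with no Brauer obstruction) and, over finite fields, Poonen's theorem are needed. Second, and the step I expect to be the main obstacle, is verifying that the Grothendieck--Lefschetz isomorphism is genuinely compatible with the Galois action and with the connecting homomorphism $\delta$ after every base change $L/k$; this is precisely what upgrades the single computation over $k$ to the statement "for every field extension $L/k$", and it is the part I would write out most carefully.
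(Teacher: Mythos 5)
Your proof is correct and follows essentially the same route as the paper: pass to the Severi--Brauer variety, cut down to dimension $3$ by a complete intersection of anticanonical divisors via Bertini, use Grothendieck--Lefschetz to see that restriction induces an isomorphism on Picard groups compatibly with every base change $L/k$, and conclude from Amitsur's theorem that both relative Brauer groups equal $\langle\alpha_L\rangle$. The only (harmless) divergences are cosmetic: the paper dismisses finite fields by noting $\Br(k)=0$ rather than invoking Poonen's Bertini theorem, works with the Picard scheme being \'etale rather than with $\Pic(X_{\ov k})^{G_k}$ and $\delta_X$, and handles positive characteristic by lifting to the Witt vectors and citing the characteristic-$0$ Grothendieck--Lefschetz theorem where you cite SGA~2 directly.
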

\begin{proof}
  If $k$ is finite there is nothing to prove, as $\Br(k)=0$. Thus, we assume that $k$ is infinite. 
  Consider the Severi-Brauer variety $X$ for $\alpha$. If $\dim X \leq 3$, we may set $Y = X$ and be done by \cite{Am}. Otherwise, let $Y\subset X$ be a smooth complete intersection of $\dim X-3$ anti-canonical divisors in $X$, which is possible by Bertini's theorem (since $k$ is infinite). 
  
  We will show that the map of group schemes
  \begin{equation}\label{resscheme}
  \Pic_X\to\Pic_Y
  \end{equation}
  is an isomorphism. It follows that 
  map \begin{equation*}\label{resmap}
      \mathbb Z=\Pic_X(k)\to\Pic_Y(k)
  \end{equation*} 
  is an isomorphism. Since the relative Brauer group is generated by obstructions associated to sections of the Picard scheme and $\Br(k(X)/k)=\langle\alpha\rangle$, we conclude that $\Br(k(Y)/k)=\langle\alpha\rangle$. Since our statements are independent of $L$, this gives the desired conclusion.
  
  For technical reasons, we briefly recall that the deformation theory of invertible sheaves on a scheme $Z$ has tangent space $H^1(Z,\ms O_Z)$ and obstruction space $H^2(Z,\ms O_Z)$. In particular, if $Z\to S$ is proper, of finite presentation and cohomologically flat in dimension $0$, and if $H^i(Z_s,\ms O_{Z_s})=0$ for $i=1,2$ and all points $s\in S$, then $\Pic_{Z/S}$ is \'etale. Moreover, if, in addition, $Z$ is smooth over $S$ and $S$ is integral, then $\Pic_{Z/S}$ is also universally closed over $S$. 
  
  This has two consequences. First, to show that \eqref{resscheme}
  is an isomorphism of group schemes, it suffices to show that the map
  $$\Pic(X\otimes \overline k)\to\Pic(Y\otimes\overline k)$$
  is an isomorphism of groups. This means that we can reduce the desired claim to the case that $k$ is algebraically closed, and hence that $X=\mathbb P^n$.
  
  Second, the deformation theory calculations allow us to reduce the question to characteristic $0$. Assume that $W$ is a complete local ring with fraction field of characteristic $0$, and let $\mathcal Y\subset\mathbb P^n_W$ be a flat complete intersection of dimension at least $3$. Consider the restriction map 
  $$\Pic_{\mathbb P^n/W}\to\Pic_{\mathcal Y/W}.$$
  By the above considerations, this is an isomorphism if and only if it is an isomorphism over the algebraic closure of the fraction field of $W$. But this follows from the Grothendieck-Lefschetz Theorem \cite[Corollary~3.3]{Hart:Ample}.

  Finally, let $W$ be the Witt vectors of $k$ and assume that $X=\mathbb P^n$. Since $Y$ is a complete intersection we can lift it to a smooth relative complete intersection $\mathcal Y\subset\mathbb P^n_W$. By the previous paragraph, we conclude that \eqref{resscheme} is an isomorphism, as desired.
\end{proof}

\begin{prop} \label{2 exact splitting}
Let $\alpha \in \Br(k)$ be a Brauer class, and suppose that $k$ is an uncountable field of characteristic $0$. Then we may find a smooth projective variety $Z$ of dimension at most $2$ such that for every field extension $L/k$, $\Br(L(Z)/L)$ is exactly the cyclic subgroup generated by $\alpha_L$.
\end{prop}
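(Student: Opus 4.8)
The plan is to run the argument of Proposition~\ref{3 splitting} one dimension lower, replacing the Grothendieck--Lefschetz theorem (which controls Picard groups only down to dimension three) by the Noether--Lefschetz theorem, whose hypotheses---characteristic zero together with the availability of a \emph{very general} member of a linear system---are precisely what the hypotheses on $k$ supply. Since $k$ is uncountable it is in particular infinite, so Bertini's theorem applies. Let $X$ be the Severi--Brauer variety of $\alpha$. If $\dim X \le 2$ we set $Z = X$ and conclude by \cite{Am}. Otherwise I would first invoke Proposition~\ref{3 splitting}, or rather its proof, to replace $X$ by a smooth threefold $Y \subset X$ cut out by anti-canonical divisors, for which the restriction $\Pic_X \to \Pic_Y$ is an isomorphism, $\Pic_{Y_{\overline k}} = \mbb Z$, one has $H^1(Y,\ms O_Y) = H^2(Y, \ms O_Y) = 0$, and $\Br(L(Y)/L) = \langle \alpha_L\rangle$ for every $L/k$. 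It then suffices to cut $Y$ down by a single further divisor to a surface $Z$ while keeping the Picard scheme unchanged.

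Fix an ample line bundle $L_0$ on $Y$ defined over $k$, for instance the restriction of $-K_X$, and for $m \gg 0$ consider the linear system $|L_0^{\otimes m}|$. A general member is a smooth surface $Z \subset Y$; applying Serre duality and Serre vanishing to $\ms O_Y(-mL_0)$ gives $H^1(Z, \ms O_Z) = 0$ for $m$ large, so $\Pic_Z$ is \'etale, and $H^0(Z, K_Z) \ne 0$. The restriction $\Pic(Y_{\overline k}) \to \Pic(Z_{\overline k})$ is injective by the weak Lefschetz theorem. I would then apply the Noether--Lefschetz theorem: in characteristic zero, for $m$ sufficiently large, the locus of members of $|L_0^{\otimes m}|_{\overline k}$ for which this restriction fails to be surjective---the Noether--Lefschetz locus---is a countable union of proper closed subvarieties of the parameter space. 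This rests on the classical fact that the very general surface section has minimal Picard number (valid once $H^0(Z,K_Z)\ne 0$) together with the algebraicity of Hodge loci (Cattani--Deligne--Kaplan). Thus for a very general member one has $\Pic(Z_{\overline k}) = \Pic(Y_{\overline k}) = \mbb Z$, with the restriction an isomorphism.

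To realize such a $Z$ over $k$ itself I would use that $k$ is uncountable. Spreading the family out over a finitely generated, hence countable, subfield of $k$ shows that the Noether--Lefschetz locus is a countable union of proper closed subvarieties of the projective space $|L_0^{\otimes m}|$, each defined over a countable subfield of $\overline k$; and over an uncountable field the $k$-points of a positive-dimensional projective space are never contained in a countable union of proper closed subvarieties. Hence there is a $k$-rational point of $|L_0^{\otimes m}|$ lying off both the discriminant and the Noether--Lefschetz locus, whose surface $Z/k$ is smooth with $\Pic(Z_{\overline k}) = \mbb Z$. Since the restriction $\Pic_Y \to \Pic_Z$ of \'etale $k$-group schemes becomes an isomorphism over $\overline k$, it is already an isomorphism over $k$, and therefore remains so after any base change $L/k$. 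As in Proposition~\ref{3 splitting}, the relative Brauer group is the image of the canonical obstruction class of Proposition~\ref{canonical obstruction} under specialization at rational points of the Picard scheme, and the restriction isomorphism $\Pic_Z \cong \Pic_Y$ identifies these obstruction classes; I would therefore conclude $\Br(L(Z)/L) = \Br(L(Y)/L) = \langle\alpha_L\rangle$ for all $L/k$, as desired.

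The main obstacle is exactly this passage from a threefold to a surface. Unlike Grothendieck--Lefschetz, the Noether--Lefschetz theorem holds only for a very general member and genuinely requires characteristic zero, so the crux is twofold: first, establishing that the Noether--Lefschetz locus is a countable union of proper closed subvarieties in our relative setting; and second, converting this transcendental ``very general'' statement into the existence of a single surface defined over the given uncountable field $k$, which is the sole role of the uncountability hypothesis. By comparison, the cohomological vanishing and the compatibility of obstruction classes under restriction are routine and follow the template of Proposition~\ref{3 splitting}.
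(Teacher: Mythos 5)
Your proposal is correct and follows essentially the same route as the paper: reduce via Proposition~\ref{3 splitting} to a threefold $Y$ with $\Pic(Y_{\overline k})=\mbb Z$, then cut down to a surface by a very general high-degree section using a Noether--Lefschetz theorem for threefolds (the paper cites Lopez--Maclean \cite{LM:NoethLef}, which is the precise reference for the step you sketch via Hodge loci), with uncountability of $k$ used exactly as you say to find such a section over $k$ itself. If anything, you make explicit two points the paper leaves implicit, namely the descent of the ``very general'' condition to a $k$-rational member of the linear system and the descent of the isomorphism of \'etale Picard schemes from $\overline k$ to $k$.
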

\begin{proof}
Via Proposition~\ref{3 splitting}, without loss of generality, we may assume that we have $Y$ of dimension $3$ such that $\Pic(Y_{\overline{k}}) = \mathbb Z$, whose generator corresponds to an $\alpha$-twisted sheaf. 
If $Y_{\overline{k}}$ is a Severi-Brauer variety, then by Noether-Lefschetz,
if $Z \subset Y$ is a very general hyperplane section with respect to a projective embedding of $Y$, pullback gives an isomorphism $\Pic Y \to \Pic Z$.

In the case $Y$ is not a Severi-Brauer, choose an ample line bundle $\ms L$ on $Y$. By \cite[Theorem~2]{LM:NoethLef}, if we choose $d$ sufficiently large, then $Z$ cut out by a very general section of $\ms L^{\otimes d}$, we have $\Pic Y \to \Pic Z$ is again an isomorphism.

It follows again that $\Pic Z$ is generated by an $\alpha$-twisted sheaf, and hence the relative Brauer group $\Br(k(Y)/k)$ is exactly the cyclic subgroup generated by $\alpha$.
\end{proof}

\begin{prop} \label{2 splitting}
Suppose that $\alpha$ is a Brauer class over a field $k$ of characteristic $0$.
Then we may find a smooth projective surface $Y/k$ such that $\alpha \in \Br(Y/k)$ and $\Br(Y_L/L)$ is finite for every field extension $L/k$.
\end{prop}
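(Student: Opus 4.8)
The plan is to produce the surface by cutting down the threefold furnished by Proposition~\ref{3 splitting}, trading the ``very general'' sections used in Proposition~\ref{2 exact splitting} (which forced $\Pic Y\to\Pic Z$ to be an isomorphism and thereby required $k$ uncountable) for merely ``sufficiently ample general'' sections. The latter still kill $H^1(\ms O)$ while only needing $k$ infinite, and although the geometric Picard group of the surface may then jump (the Noether--Lefschetz phenomenon), it always stays finitely generated, which is exactly what finiteness of the relative Brauer group requires.

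First I would invoke Proposition~\ref{3 splitting} to obtain a smooth projective variety $W/k$ of dimension at most $3$ with $\Br(L(W)/L)=\langle\alpha_L\rangle$ for every $L/k$, and with $H^1(W,\ms O_W)=H^2(W,\ms O_W)=0$ (this vanishing is part of the construction there). After replacing $W$ by a product $W\times\mbb P^m$ when $\dim W<2$ or $\dim W$ needs adjusting — a change which alters neither the relative Brauer group (the new function field is purely transcendental over the old one) nor the vanishing of $H^1(\ms O)$, and which keeps $\Pic^0$ trivial — I may assume $\dim W\in\{2,3\}$. If $\dim W=2$ the surface $W$ already has the desired properties, so assume $\dim W=3$. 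Choosing an ample $\ms L$ on $W$, by Bertini (using that $k$ is infinite in characteristic $0$) I let $Y\subset W$ be a smooth surface cut out by a general section of $\ms L^{\otimes d}$ for $d\gg 0$.

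The two points to check are that $\Pic^0_{Y_L}$ is trivial for every $L$ and that $\alpha\in\Br(Y/k)$. For the first, the sequence
\[0\to\ms O_W(-Y)\to\ms O_W\to\ms O_Y\to 0\]
together with $H^1(W,\ms O_W)=0$ gives $H^1(Y,\ms O_Y)=0$ as soon as $H^2(W,\ms O_W(-Y))=0$; the latter holds for $d\gg 0$ by Kodaira vanishing (here characteristic $0$ is used), since $\omega_W\otimes\ms O_W(Y)$ is then ample. As $H^1(\ms O)=0$ is preserved under base change to any $L$, in characteristic $0$ each $\Pic^0_{Y_L}$ is trivial, so $\Pic_{Y_L}$ is \'etale and $\Pic Y_{\ov L}=\Pic Y_{\ov k}$ is a finitely generated abelian group. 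For the second, the restriction morphism of Picard schemes $r\colon\Pic_W\to\Pic_Y$ is compatible with the canonical obstruction classes of Proposition~\ref{canonical obstruction}, that is $r^*\beta_Y=\beta_W$; equivalently, the $\alpha$-twisted invertible sheaf generating $\Pic W_{\ov k}$ restricts to an $\alpha$-twisted invertible sheaf on $Y_{\ov k}$. Choosing $x\in\Pic_W(k)$ with $\beta_W|_x=\alpha$, one gets $\beta_Y|_{r(x)}=(r^*\beta_Y)|_x=\alpha$, whence $\Br(W/k)\subseteq\Br(Y/k)$ and so $\alpha\in\langle\alpha\rangle=\Br(W/k)$ lands in $\Br(Y/k)$.

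Finally, for any $L/k$ the relative Brauer group $\Br(Y_L/L)$ is, by the low-degree obstruction-class description of Section~2 (as used in Proposition~\ref{canonical obstruction}), the image of the specialization homomorphism $\Pic_{Y_L}(L)\to\Br(L)$. Since $\Pic_{Y_L}(L)\subseteq\Pic Y_{\ov k}$ is finitely generated and $\Br(L)$ is torsion, this image is a finitely generated torsion group, hence finite, which is the assertion. The only genuinely nontrivial point — and the one I expect to be the main obstacle — is the compatibility $r^*\beta_Y=\beta_W$ guaranteeing that $\alpha$ survives restriction to the surface; this is functoriality of the canonical obstruction class under restriction of line bundles, which I would verify from the functoriality of the Leray sequence in $W$ (equivalently, by restricting the Poincar\'e gerbe). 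The cohomology vanishing is routine in characteristic $0$, and the rest is bookkeeping.
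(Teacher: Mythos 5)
Your proof is correct, but it takes a somewhat different route from the paper's. The paper works directly with the Severi--Brauer variety $X$ of $\alpha$: it takes a smooth two-dimensional \emph{linear} section $Y\subset X$ and kills the Picard variety by a topological argument, namely the Lefschetz hyperplane theorem forces $b_1(Y_{\mbb C})=b_1(X_{\mbb C})=0$, so the Albanese (hence Picard) variety of $Y$ is trivial and $\Pic(Y)=\mathrm{NS}(Y)$ is finitely generated. You instead start from the threefold of Proposition~\ref{3 splitting}, cut by a section of $\ms L^{\otimes d}$ for $d\gg 0$, and kill $H^1(Y,\ms O_Y)$ by coherent cohomology (Serre duality plus Kodaira/Serre vanishing); both arguments use characteristic $0$ and an infinite base field, and from finite generation of $\Pic(Y_{\ov k})$ onward the two proofs conclude identically via the specialization homomorphism of Proposition~\ref{canonical obstruction}. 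Your version is arguably more uniform (the $\times\,\mbb P^m$ device also handles the low-dimensional cases, which the paper glosses over). One remark: the step you single out as ``the main obstacle'' --- the compatibility $r^*\beta_Y=\beta_W$ needed to see $\alpha\in\Br(Y/k)$ --- is not actually needed. Your surface $Y$ admits a morphism to the Severi--Brauer variety $X$ of $\alpha$ (it sits inside a complete intersection in $X$, possibly times a projective space), so $X$ has a $k(Y)$-rational point, $\alpha_{k(Y)}=0$, and $\alpha\in\Br(k(Y)/k)=\Br(Y/k)$ immediately; this is why the paper does not comment on the point at all. The obstruction-class functoriality you invoke is true, but it is a sledgehammer here.
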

\begin{proof}
Without loss of generality, we may assume $k$ is infinite.
Consider the Severi-Brauer variety $X$ for $\alpha$. If $\dim X \leq 2$, we may set $Y = X$ and be done by \cite{Am}. Otherwise, fix a projective embedding $X \to \mathbb P^N$ for some $N$. Let $Y \subset X$ be a smooth linear section of $X$ of dimension $2$ (we can find such a subvariety since $k$ is infinite).
By the Lefschetz hyperplane Theorem \cite[page~156]{GrifHar}, the map the first Betti numbers of $X_{\mathbb C}$ and $Y_{\mathbb C}$ coincide and hence vanish (where $\mbb C$ denotes an algebraic closure of $k$).
It follows that the Albanese variety for $Y$, and hence the Picard variety for $Y$ must be trivial, which implies that
the Picard group and Neron-Severi groups for $Y$ agree. In particular, $Pic(Y)$ is a finitely generated group \cite[Theorem~5.1, Exp~XIII, p.~650]{SGA6}. If $\beta$ is the canonical obstruction class for $Y$, then for any field extension $L/k$, the class $\beta_L$ is determined by its values on the generators of $\Pic_{Y}(L)$, which are finite. The relatives Brauer group $\Br(Y_L/L)$ is, by Proposition~\ref{canonical obstruction}, generated by the values of $\beta$ at these generators, and therefore is finite.
\end{proof}

\begin{thm} \label{3 genus}
Let $\alpha \in \Br(k)$ be a nontrivial Brauer class and $k$ a field of characteristic $0$. Then $[\alpha, \infty)_3$ is always finite and equal to the cyclic subgroup generated by $\alpha$. If $k$ is also uncountable, then $[\alpha, \infty)_2 = \left< \alpha \right>$ as well.
\end{thm}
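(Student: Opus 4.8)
The plan is to reduce the theorem to the two low-dimensional splitting constructions already established, namely Proposition~\ref{3 splitting} for the transcendence degree $3$ statement and Proposition~\ref{2 exact splitting} for the uncountable transcendence degree $2$ statement. The key observation is that the upper genus $[\alpha, \infty)_i$ is, by definition, exactly the collection of classes $\beta$ that are split by every finitely generated extension $K/k$ of transcendence degree at most $i$ which splits $\alpha$. Thus it suffices to exhibit a single such $K$ whose relative Brauer group $\Br(K/k)$ is precisely $\langle \alpha \rangle$, together with the trivial remark that every class in $\langle \alpha \rangle$ lies in $[\alpha, \infty)_i$, since any field splitting $\alpha$ splits all of its powers.

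First I would handle the transcendence degree $3$ case. Applying Proposition~\ref{3 splitting} to $\alpha$ produces a smooth projective variety $Y$ of dimension at most $3$ with $\Br(L(Y)/L) = \langle \alpha_L \rangle$ for every extension $L/k$; taking $L = k$, the function field $K = k(Y)$ is finitely generated of transcendence degree at most $3$, so $K \in k(3)$, and since $\alpha \in \Br(k(Y)/k)$ the field $K$ splits $\alpha$. Now if $\beta \in [\alpha, \infty)_3$, then $K$ must also split $\beta$, so $\beta \in \Br(K/k) = \langle \alpha \rangle$; combined with the reverse inclusion this gives $[\alpha, \infty)_3 = \langle \alpha \rangle$. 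Finiteness is automatic, since $\alpha$ has finite order in $\Br(k)$ and hence $\langle \alpha \rangle$ is a finite cyclic group.

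The uncountable transcendence degree $2$ case is identical in structure, substituting Proposition~\ref{2 exact splitting} for Proposition~\ref{3 splitting}: under the hypotheses that $k$ is uncountable of characteristic $0$, that proposition yields a smooth projective variety $Z$ of dimension at most $2$ with $\Br(L(Z)/L) = \langle \alpha_L \rangle$ for all $L/k$. Then $k(Z) \in k(2)$ splits $\alpha$, and the same argument forces any $\beta \in [\alpha, \infty)_2$ into $\Br(k(Z)/k) = \langle \alpha \rangle$, giving $[\alpha, \infty)_2 = \langle \alpha \rangle$.

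Since the genuinely difficult geometric input --- producing a variety of the prescribed low dimension whose relative Brauer group is exactly the cyclic group generated by $\alpha$ over all base extensions --- has already been carried out in Propositions~\ref{3 splitting} and~\ref{2 exact splitting}, the only points to verify with care here are bookkeeping: that $k(Y)$ and $k(Z)$ genuinely belong to $k(3)$ and $k(2)$ respectively (finite generation and the transcendence degree bound, both immediate from $\dim Y \le 3$ and $\dim Z \le 2$), and that the identity ``$\Br(k(Y)/k)$ equals the set of classes split by $k(Y)$'' is invoked in the correct direction. I expect no obstacle beyond this; the whole content of the theorem is the translation of the two splitting propositions into the language of upper genera.
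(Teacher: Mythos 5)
Your proposal is correct and follows essentially the same route as the paper: invoke Proposition~\ref{3 splitting} (resp.\ Proposition~\ref{2 exact splitting}) to produce $Y$ with $\Br(k(Y)/k)=\langle\alpha\rangle$, observe that $[\alpha,\infty)_i\subset\Br(k(Y)/k)$ since $k(Y)$ splits $\alpha$, and note the reverse inclusion because any field splitting $\alpha$ splits its powers. The paper's proof is just a terser version of the same argument, so there is nothing to add.
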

\begin{proof}
By Proposition~\ref{3 splitting}, we may find $Y$ of dimension at most $3$ such that $\Br(k(Y)/k) = \left<\alpha\right>$. But therefore $[\alpha, \infty)_3 \subset \Br(k(Y)/k)$.
But since $[\alpha, \infty)_3$ is a subgroup of $\Br(k)$ containing $\alpha$, the result follows.
The case of an uncountable field follows similarly from Proposition~\ref{2 exact splitting}.
\end{proof}

\begin{thm} \label{2 genus}
Let $\alpha \in \Br(k)$ be a nontrivial Brauer class over a field $k$. Then $[\alpha, \infty)_2$ is always finite.
\end{thm}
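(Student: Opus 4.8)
The plan is to mirror the argument of Theorem~\ref{3 genus}, replacing the three-dimensional model of Proposition~\ref{3 splitting} by the surface model of Proposition~\ref{2 splitting}. The key observation is that the entire upper genus is pinned down by a \emph{single} low-transcendence-degree splitting field of $\alpha$: recalling (as in the proof of Theorem~\ref{3 genus}) that $\beta \in [\alpha,\infty)_2$ means $\alpha \precspl{k(2)} \beta$, i.e.\ that every finitely generated extension $K/k$ of transcendence degree at most $2$ which splits $\alpha$ also splits $\beta$, it suffices to produce one $K \in k(2)$ splitting $\alpha$ for which $\Br(K/k)$ is finite. Granting such a $K$, every $\beta \in [\alpha,\infty)_2$ is split by $K$, so $[\alpha,\infty)_2 \subseteq \Br(K/k)$ and the finiteness is immediate.

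First I would invoke Proposition~\ref{2 splitting} (which is where the characteristic $0$ hypothesis enters) to obtain a smooth projective surface $Y/k$ with $\alpha \in \Br(Y/k) = \Br(k(Y)/k)$ and with $\Br(Y_L/L)$ finite for every extension $L/k$. Specializing to $L = k$ shows $\Br(k(Y)/k)$ is finite, while $\dim Y = 2$ gives $k(Y) \in k(2)$, and $\alpha \in \Br(k(Y)/k)$ says exactly that $k(Y)$ splits $\alpha$. Thus $K = k(Y)$ is the desired witness field.

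It then remains only to assemble the containment. Given any $\beta \in [\alpha,\infty)_2$, the field $k(Y) \in k(2)$ splits $\alpha$, so by definition of $\precspl{k(2)}$ it splits $\beta$ as well; hence $\beta \in \Br(k(Y)/k)$, and therefore $[\alpha,\infty)_2 \subseteq \Br(k(Y)/k)$, a finite group. I do not expect any genuine obstacle at this level, since all of the difficulty has been front-loaded into Proposition~\ref{2 splitting}, where the Lefschetz hyperplane theorem forces the Albanese, hence the Picard, variety of $Y$ to be trivial; this makes $\Pic(Y)$ finitely generated and bounds the canonical obstruction class of Proposition~\ref{canonical obstruction} uniformly in $L$. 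The only subtlety worth flagging is the uniformity across field extensions, but this is precisely what Proposition~\ref{2 splitting} supplies—and for the present theorem only the value $L = k$ is actually needed.
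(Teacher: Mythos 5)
Your proposal is correct and is essentially identical to the paper's proof: both reduce the statement to Proposition~\ref{2 splitting}, take $K = k(Y)$ as the single witness splitting field of transcendence degree $2$, and conclude $[\alpha,\infty)_2 \subseteq \Br(k(Y)/k)$, a finite group. Your remarks on where the characteristic~$0$ hypothesis enters and on the uniformity in $L$ are accurate but not needed beyond the case $L = k$, exactly as you note.
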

\begin{proof}
By Proposition~\ref{2 splitting}, we may find $Y$ of dimension at most $2$ such that $\Br(k(Y)/k)$ is finite and contains $\alpha$.
But since $[\alpha, \infty)_2$ is contained in $\Br(k(Y)/k)$, the result follows.
\end{proof}

\bibliographystyle{alpha}
\newcommand{\etalchar}[1]{$^{#1}$}
\def\cprime{$'$} \def\cprime{$'$} \def\cprime{$'$} \def\cprime{$'$}
  \def\cftil#1{\ifmmode\setbox7\hbox{$\accent"5E#1$}\else
  \setbox7\hbox{\accent"5E#1}\penalty 10000\relax\fi\raise 1\ht7
  \hbox{\lower1.15ex\hbox to 1\wd7{\hss\accent"7E\hss}}\penalty 10000
  \hskip-1\wd7\penalty 10000\box7}

\end{document}